%------------------------------------------------------------------------
%
% Plasma.tex
%
%---------------------------------------------------------------------------
%\documentstyle{amsart}
\documentclass[11pt]{amsart}
\usepackage{amssymb,amsmath}
\usepackage{hyperref}
\usepackage{bm}
\usepackage[]{graphicx}
\usepackage{amssymb, epsfig}
%
%
%magic command
%
%\usepackage[notref,notcite]{showkeys}
%
%
%%%%%%%%%%%%%%%%%%%%%%%%%%%%%%%%%%%%%%%%%%%%%%%%%%%%%%%%%%%%%%%%%%%%%%%%%%%%%
% pl.tex (plasma.tex definitions)
%%%%%%%%%%%%%%%%%%%%%%%%%%%%%%%%%%%%%%%%%%%%%%%%%%%%%%%%%%%%%%%%%%%%%%%%%%%%%

%
\numberwithin{equation}{section}

\pagestyle{plain}
\setlength{\textheight}{21.0truecm}
\setlength{\textwidth}{16.5truecm}
\setlength{\oddsidemargin}{0.00truecm}
\setlength{\evensidemargin}{0.00truecm}
%
%
%%%%%%%%%%%%%%%%%%%%%%%%%%%%%%%%%%%%%%%%%%%%%%%%%%%%%%%%%%%%%%%%%%%%
%  end of pldef.tex
%%%%%%%%%%%%%%%%%%%%%%%%%%%%%%%%%%%%%%%%%%%%%%%%%%%%%%%%%%%%%%%%%%%%

%
%---------------------------------------------------------------------------
%-------------------------Begin document------------------------------------
%---------------------------------------------------------------------------
%
%\renewcommand{\baselinestretch}{2}
%-------------------------------------

%

\numberwithin{figure}{section}
%
%    Absolute value notation
%

%
%    Exponential random variable
%

\newcommand{\eps}{\varepsilon}

%
%    Blank box placeholder for figures (to avoid requiring any
%    particular graphics capabilities for printing this document).
%

%
%%%%%%%%%%%%%%%%%%%%%%%%%%%%%%%%%%%%%%%%%%%%%%%%%%%%%%%%%%%%%%%%%%%
%

%\newtheorem{thm}{Theorem}[section]

%\newtheorem{lem}[thm]{Lemma}

\newtheorem{theorem}{Theorem}[section]

\newtheorem{remark}[theorem]{Remark}
%
%\newtheorem{proposition}[thm]{Proposition}
%\newtheorem{remm}{Remark}[section]
%\newtheorem{remark}[remm]{Remark}
%%%%%%%%%%%\newtheorem{lem}[thm]{Lemma}
%
%
%%%%%%%%%%%%%%%%%%%%%%%%%%%%%%%%%%%%%%%%%%%%%%%%%%%%%%%%%%%%%%%%%%
%%%%%%%%%%%%%%%%%%%%%%%%%%%%%%%%%%%%%%%%%%%%%%%%%%%%%%%%%%%%%%%%%%
%%%%%%%%%%%%%%%%%%%%%%%%%%%%%%%%%%%%%%%%%%%%%%%%%%%%%%%%%%%%%%%%%%
%
%\begin{document}
%\title{Estimates}
%

%%%%%%%%%%%%%%%%%%%%%%%%%%%%%%%%%%%%%%%%%%%%%%%%%%%%%%%%%%%%%%%%%%%%%%%%%%%%%%
%%%%%%%%%%%%%%%%%%%%%%%%%%%%%%%%%%%%%%%%%%%%%%%%%%%%%%%%%%%%%%%%%%%%%%%%%%%%%%
%

%%%%%%%%%%%%%%%%%%%%%%%%%%%%%%%%%%%%%%%%%%%%%%%%%%%%%%%%%%%%
% Dave's commands
%%%%%%%%%%%%%%%%%%%%%%%%%%%%%%%%%%%%%%%%%%%%%%%%%%%%%%%%%%%%
                              %Spacer for below Matrix display functions
                              %Print a symbol with two subscripts eg a matrix entry
                      %Print a symbol with two subscripts and a superscript eg a matrix entry
           %Print a symbol with two subscripts and two superscripts eg a matrix entry
                       %an upright d for differentials
                             %Residue of an expression
                                        %Shorthand for the set of natural numbers
                                        %Shorthand for the set of integers
                                        %Shorthand for the set of rational numbers
                                        %Shorthand for the set of real numbers
                                        %Shorthand for the set of complex numbers

%
\begin{document}
%
%
%%%%%%%%%%%%%%%%%%%%%%%%%%%%%%%%%%%%%%%%%%%%%%%%%%%%%%%%%%%%%%%%%%
%
%%%%%%%%%%%%%%%%%%%%%%%%%%%%%%%%%%%%%%%%%%%%%%%%%%%%%%%%%%%%%%%%%%%
%  ABSTRACT
%%%%%%%%%%%%%%%%%%%%%%%%%%%%%%%%%%%%%%%%%%%%%%%%%%%%%%%%%%%%%%%%%%%
%Schr\"{o}dinger
%%%%%%%%%%%%%%%%%%%%%%%%%%%%%%%%%%%%%%%%%%%%%%%%%%
%
\title[Addendum with non-zero initial data]{Addendum to the article "On the Dirichlet to Neumann Problem for the 1-dimensional
Cubic NLS Equation on the Half-Line"}
\author[D.~C. Antonopoulou]{D.~C. Antonopoulou$^{\$}$}

\author[S. Kamvissis]{S. Kamvissis$^{*}$}
\thanks{$^*$ Department of Pure and Applied Mathematics,
University of Crete, GR--700 13 Heraklion, Greece, and, Institute
of Applied and Computational Mathematics, FORTH, GR--711 10
Heraklion, Greece, email: spyros@tem.uoc.gr}
\thanks{$^\$$ Department of Mathematics, University
of Chester, Thornton Science Park, CH2 4NU, UK, and, Institute of
Applied and Computational Mathematics, FORTH, GR--711 10
Heraklion, Greece, email: d.antonopoulou@chester.ac.uk}

%\begin{AMS}
\subjclass{}
%\end{AMS}
%\date{Received: date / Revised: date}
%\date{today}
%
%
\begin{abstract}
We present a short note on the extension of the results of
\cite{AKnon} to the case of non-zero initial data. More
specifically, the defocusing cubic NLS equation is considered on
the half-line with decaying (in time) Dirichlet data  and
sufficiently smooth and decaying (in space) initial data. We prove that for
this case also, and for a large class of decaying Dirichlet data,
the Neumann data are  sufficiently decaying so that the Fokas unified
method for the solution of defocusing NLS is applicable.
%
%\subclass{}
\end{abstract}
%\maketitle \textbf{Keywords:} {}}
%
%
\maketitle
\pagestyle{myheadings}
\thispagestyle{plain}
%`
%
%
%%%%%%%%%%%%%%%%%%%%%%%%%%%%%%%%%%%%%%%%%%%%%%%%%%%%%%%%%%%%%%%%%%%%
% SECTION 1
%%%%%%%%%%%%%%%%%%%%%%%%%%%%%%%%%%%%%%%%%%%%%%%%%%%%%%%%%%%%%%%%%%%%
%
\section{Introduction}
In this note, we consider the defocusing non-linear  Schr\"odinger
equation (NLS) with cubic non-linearity, posed on the real
positive semi-axis $\mathbb{R}^+$
\begin{equation}\label{nls}
{\rm i}q_t+q_{xx}-2|q|^2q=0,\;\;\;\;x> 0,\;\;\;\;0< t<
+\infty,
\end{equation}
and initial-boundary data
\begin{equation}\label{ic}
\begin{split}
&q(x,0)=q_0(x),\;\;\;\;0\leq x<+\infty\\
&q(0,t)=Q(t),\;\;\;\;0\leq t<+\infty,\\
\end{split}
\end{equation}
where  $q_0, Q$ are classical functions satisfying
the compatibility condition $q_0(0)=Q(0)$.

Existence and uniqueness of solution for the problem
\eqref{nls}-\eqref{ic} with $q_0\in H^2$, $Q\in C^2$ and
$q_0(0)=Q(0)$, have been established in \cite{CarBu}.

Our aim here is to show that under specific conditions on $q_0$ and $Q$, the
function $q_x(0,t)$ is in $L^1(dt)$ and hence the initial/boundary value problem
admits a full analysis via the unified inverse scattering method initiated and studied by Fokas and collaborators.

In a previous publication \cite{AKnon}
we considered the simplified problem with zero initial data.
It turns out that the result  can be easily extended to the case of general initial data
satisfying  particular decay and smoothness conditions. For the convenience of the reader we provide here a
self-contained proof with full details.

\section{An $L^2(0,\infty)$ estimate for the Neumann data}%{Defocusing NLS}

We begin with some notations: $(\cdot,\cdot)$ will denote the
$L^2(0,\infty)$ inner product in space variables, and $\|\cdot\|$
the induced norm. The symbol $\|\cdot\|_4$ is used for the
$L^4(0,\infty)$ norm in space, while for any integer $p\geq 1$,
$\|\cdot\|_{L^p(0,t)}$ will denote the $L^p$ norm in the time
interval $(0,t)$. The symbol $c$ will be used to denote a generic positive
constant.

We emphasize here the following decay condition for the solution $q$ of
the NLS problem \eqref{nls}-\eqref{ic}
\begin{equation}\label{dc}
q(x,t)\rightarrow
0\;\;\mbox{as}\;\;\;\;x\rightarrow\infty\;\;\;\;\mbox{for
any}\;\;t\geq 0,
\end{equation}
which is crucial for our arguments, and is of course
provided by the result  in \cite{CarBu}.

\begin{theorem}\label{thm1} Let $q$ be the
unique global classical solution $q\in C^1(L^2)\cap C^0(H^2)$ of
the problem \eqref{nls}-\eqref{ic}, with  $Q\in C^2$ and
$Q(0)=q_0(0)$. It holds
that for any $t>0$
\begin{equation}\label{mainineq1}
\begin{split}
\Big{(}\int_0^t|q_x(0,s)|^2ds\Big{)}^{1/2}=&\|q_x(0,\cdot)\|_{L^2(0,t)}\\
\leq&
c\|q(0,\cdot)\|_{L^2(0,t)}^2+c\|q_t(0,\cdot)\|_{L^2(0,t)}^2+\|q(0,\cdot)\|_{L^4(0,t)}^4\\
&+c\|q_0\|^2+c\|q_{0x}\|^2+c\|q_0\|_{L^4(0,\infty)}^4,
\end{split}
\end{equation}
where $c$ is a positive constant. The previous estimate is true also
when $t$ is replaced by $\infty$.
\end{theorem}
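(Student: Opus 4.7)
The natural multiplier for extracting the Neumann trace at $x=0$ is $\bar q_x$. Multiplying \eqref{nls} by $\bar q_x$, taking the real part, integrating in $x\in(0,\infty)$, and using \eqref{dc} to discard the contributions at $+\infty$ produces the pointwise-in-time identity
\[
\tfrac12|q_x(0,t)|^2 \;=\; -\int_0^\infty \mathrm{Im}\bigl(q_t\bar q_x\bigr)\,dx \;+\; \tfrac12|q(0,t)|^4,
\]
since $\mathrm{Re}(q_{xx}\bar q_x) = \tfrac12\partial_x|q_x|^2$ and $2\,\mathrm{Re}(|q|^2 q\,\bar q_x) = \tfrac12\partial_x|q|^4$. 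Integrating in $s\in(0,t)$ already produces the $\|q(0,\cdot)\|_{L^4(0,t)}^4$ contribution on the right-hand side.

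The bulk space-time integral $\int_0^t\!\int_0^\infty \mathrm{Im}(q_s\bar q_x)\,dx\,ds$ can be reduced to pure boundary data. Integration by parts in $x$ generates a boundary contribution $-q_s(0,s)\overline{Q(s)}$ together with an interior term $\int q_{sx}\bar q\,dx$; the latter, combined with the product-rule identity $\tfrac{d}{ds}\int q_x\bar q\,dx = \int q_{sx}\bar q + \int q_x \bar q_s$ (noting $\mathrm{Im}\int q_x\bar q_s = -\mathrm{Im}\int q_s\bar q_x$), lets me solve algebraically for $\int_0^t\mathrm{Im}\!\int q_s\bar q_x\,dx\,ds$. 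The outcome is the clean identity
\[
\|q_x(0,\cdot)\|_{L^2(0,t)}^2 \;=\; \int_0^t \mathrm{Im}\bigl(q_s(0,s)\overline{Q(s)}\bigr)\,ds \;+\; \Bigl[\mathrm{Im}\!\int_0^\infty q_x\bar q\,dx\Bigr]_{s=0}^{s=t} \;+\; \|q(0,\cdot)\|_{L^4(0,t)}^4.
\]
The first term is bounded by Cauchy-Schwarz in $s$ and Young's inequality, producing $\|q_t(0,\cdot)\|_{L^2(0,t)}^2+\|q(0,\cdot)\|_{L^2(0,t)}^2$; the trace at $s=0$ is handled similarly, giving $\|q_{0x}\|^2+\|q_0\|^2$.

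To bound the trace at $s=t$, which is $\leq \|q_x(\cdot,t)\|\,\|q(\cdot,t)\|$, I would invoke the two standard half-line conservation identities. Multiplying \eqref{nls} by $\bar q$ and taking imaginary parts yields
\[
\|q(\cdot,t)\|^2 = \|q_0\|^2 + 2\!\int_0^t \mathrm{Im}\bigl(q_x(0,s)\overline{Q(s)}\bigr)\,ds,
\]
while multiplying by $\bar q_t$ and taking real parts yields the Hamiltonian relation
\[
\|q_x(\cdot,t)\|^2 + \|q(\cdot,t)\|_4^4 = \|q_{0x}\|^2 + \|q_0\|_4^4 - 2\!\int_0^t \mathrm{Re}\bigl(q_x(0,s)\overline{q_t(0,s)}\bigr)\,ds.
\]
Cauchy-Schwarz in time applied to each right-hand side inevitably brings in a factor $\|q_x(0,\cdot)\|_{L^2(0,t)}=\sqrt N$, where $N:=\|q_x(0,\cdot)\|_{L^2(0,t)}^2$ is the very quantity we wish to estimate.

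The main technical obstacle is closing this loop. Substituting the mass/energy bounds into the trace term and using Young inequalities of the form $\sqrt N\,\|\cdot\|_{L^2}\leq \delta N + C\|\cdot\|_{L^2}^2$ produces a schematic inequality $N \leq \delta N + c\bigl(\|q_t(0,\cdot)\|_{L^2(0,t)}^2 + \|q(0,\cdot)\|_{L^2(0,t)}^2 + \|q_0\|^2 + \|q_{0x}\|^2 + \|q_0\|_4^4 + \|q(0,\cdot)\|_{L^4(0,t)}^4\bigr)$ with $\delta<1$. Absorbing $\delta N$ on the left, regrouping, and passing to the square root gives the stated estimate. Since none of the constants depend on $t$, the case $t=\infty$ follows by letting $t\to\infty$.
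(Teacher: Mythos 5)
Your proposal is correct and takes essentially the same route as the paper: the identity you obtain from the multiplier $\bar q_x$ is precisely the real part of the paper's integrated identity \eqref{ja1}, and your mass and energy balances and the final absorption of $\delta N$ coincide with \eqref{mt2}, \eqref{mt3} and the manipulation in \eqref{mt4*}. No gaps beyond the power-of-the-left-hand-side looseness that the paper's own final display shares.
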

\begin{proof}
Multiplying \eqref{nls} by $\bar{q}$ and integrating in
$x\in(0,\infty)$ we obtain after applying integration by parts
$${\rm
i}(q_t,q)-\|q_x\|^2-q_x(0,t)\bar{q}(0,t)-2(|q|^2q,q)=0.$$
Taking imaginary parts we arrive at
\begin{equation}\label{j**}
\frac{d}{dt}\|q\|^2=2{\rm Im}\{q_x(0,t)\bar{q}(0,t)\},
\end{equation}
 which after
integration in time gives
\begin{equation}\label{mt2}
\|q(\cdot,t)\|^2\leq
\|q(\cdot,0)\|^2+2\|q_x(0,\cdot)\|_{L^2(0,t)}\|q(0,\cdot)\|_{L^2(0,t)}.
\end{equation}

Multiplying now \eqref{nls} by $\bar{q_t}$ and integrating in
$x\in(0,\infty)$ we obtain
$${\rm i}
\|q_t\|^2-(q_x,q_{xt})-q_x(0,t)\bar{q_t}(0,t)-2(|q|^2q,q_t)=0,$$
while taking real parts and since
$$\frac{d}{dt}[|q|^2|q|^2]=4{\rm Re}\{|q|^2q\bar{q_t}\},$$ we have
\begin{equation}\label{j0}
\frac{d}{dt}\|q_x\|^2=-\frac{d}{dt}\||q|^2\|^2-2{\rm
Re}\{q_x(0,t)\bar{q_t}(0,t)\}.
\end{equation}
 Integration in time yields
$$\|q_x(\cdot,t)\|^2-\|q_x(\cdot,0)\|^2=
-\||q(\cdot,t)|^2\|^2+\||q(\cdot,0)|^2\|^2-2\int_0^t{\rm Re}\{q_x(0,s)\bar{q_t}(0,s)\}ds,$$
and thus,
\begin{equation}\label{mt3}
\|q_x(\cdot,t)\|^2+\|q(\cdot,t)\|_{L^4(0,\infty)}^4\leq
\|q_x(\cdot,0)\|^2+\|q(\cdot,0)\|_{L^4(0,\infty)}^4+
2\|q_x(0,\cdot)\|_{L^2(0,t)}\|q_t(0,\cdot)\|_{L^2(0,t)}.
\end{equation}

We now multiply \eqref{nls} with $\bar{q_x}$ integrate in space
and take real parts which yields
$$-{\rm Im}(q_t,q_x)+{\rm Re}(q_{xx},q_x)-2{\rm
Re}(|q|^2q,q_x)=0.$$ Using that
$${\rm Im}(q_t,q_x)=-\frac{1}{2}{\rm
i}\frac{d}{dt}(q,q_x)-\frac{1}{2}{\rm i}q(0,t)\bar{q_t}(0,t),$$
and the relations
$${\rm Re}(q_{xx},q_x)=-\frac{1}{2}|q_x(0,t)|^2,\;\;\;\;{\rm
Re}(|q|^2q,q_x)=-\frac{1}{4}|q(0,t)|^4,$$ we obtain
\begin{equation}\label{ja1}
|q_x(0,t)|^2={\rm i}\frac{d}{dt}(q,q_x)+{\rm
i}q(0,t)\bar{q_t}(0,t)+|q(0,t)|^4.
\end{equation}
 Integrating the above
in time, we get
$$\int_0^t|q_x(0,s)|^2ds={\rm i}(q(\cdot,t),q_x(\cdot,t))-{\rm i}(q(\cdot,0),q_x(\cdot,0))+{\rm
i}\int_0^tq(0,s)\bar{q_t}(0,s)ds+\int_0^t|q(0,s)|^4ds,$$
and so,
\begin{equation}\label{mt4}
\begin{split}
\|q_x(0,\cdot)\|_{L^2(0,t)}^2\leq&\|q(\cdot,t)\|\|q_x(\cdot,t)\|+\|q(\cdot,0)\|\|q_x(\cdot,0)\|\\
&+\|q(0,\cdot)\|_{L^2(0,t)}\|q_t(0,\cdot)\|_{L^2(0,t)}+\|q(0,\cdot)\|_{L^4(0,t)}^4\\
\leq &c\|q(\cdot,t)\|^2+c\|q_x(\cdot,t)\|^2+\|q(\cdot,0)\|\|q_x(\cdot,0)\|\\
&+\|q(0,\cdot)\|_{L^2(0,t)}\|q_t(0,\cdot)\|_{L^2(0,t)}+\|q(0,\cdot)\|_{L^4(0,t)}^4.
\end{split}
\end{equation}
Using now \eqref{mt2} and \eqref{mt3}  we get
\begin{equation}\label{mt4*}
\begin{split}
\|q_x(0,\cdot)\|_{L^2(0,t)}^2\leq
&c\|q(\cdot,0)\|^2+2c\|q_x(0,\cdot)\|_{L^2(0,t)}\|q(0,\cdot)\|_{L^2(0,t)}\\
&+c\|q_x(\cdot,0)\|^2+c\|q(\cdot,0)\|_{L^4(0,\infty)}^4+
2c\|q_x(0,\cdot)\|_{L^2(0,t)}\|q_t(0,\cdot)\|_{L^2(0,t)}\\
&+\|q(\cdot,0)\|\|q_x(\cdot,0)\|
+\|q(0,\cdot)\|_{L^2(0,t)}\|q_t(0,\cdot)\|_{L^2(0,t)}+\|q(0,\cdot)\|_{L^4(0,t)}^4\\
\leq
&c_0\|q_x(0,\cdot)\|_{L^2(0,t)}^2+c\|q(0,\cdot)\|_{L^2(0,t)}^2+c\|q_t(0,\cdot)\|_{L^2(0,t)}^2\\
&+c\|q(\cdot,0)\|^2+c\|q_x(\cdot,0)\|^2+c\|q(\cdot,0)\|_{L^4(0,\infty)}^4\\
&+\|q(\cdot,0)\|\|q_x(\cdot,0)\|+\|q(0,\cdot)\|_{L^2(0,t)}\|q_t(0,\cdot)\|_{L^2(0,t)}+\|q(0,\cdot)\|_{L^4(0,t)}^4,
\end{split}
\end{equation}
where $c_0$ can be made as small as we want. Thus, we arrive at
the result \eqref{mainineq1}, i.e.
\begin{equation*}
\begin{split}
\|q_x(0,\cdot)\|_{L^2(0,t)}^2\leq&
c\|q(0,\cdot)\|_{L^2(0,t)}^2+c\|q_t(0,\cdot)\|_{L^2(0,t)}^2+\|q(0,\cdot)\|_{L^4(0,t)}^4\\
&+c\|q(\cdot,0)\|^2+c\|q_x(\cdot,0)\|^2+c\|q(\cdot,0)\|_{L^4(0,\infty)}^4.
\end{split}
\end{equation*}
\end{proof}
\begin{remark}\label{rem1}
Note that the previous estimate provides a bound for
$$\|q_x(0,\cdot)\|_{L^2(0,\infty)}^2,$$ when $q_0$ is sufficiently
smooth.

More specifically, if $q_0\in H^1(0,\infty)\cap L^4(0,\infty)$ and
if $q(0,t)$, $q_t(0,t)$, have polynomial decay of order
$\mathcal{O}(t^{-\alpha})$, $\mathcal{O}(t^{-\beta})$
respectively as $ t \to \infty,$ with $\alpha>1/2$ and $\beta>1/2$
then the  estimate \eqref{mainineq1} of Theorem \ref{thm1}
implies
\begin{equation}\label{unifqx}\int_0^\infty
|q_x(0,t)|^2dt<\infty.
\end{equation}

In addition, the estimate \eqref{unifqx}, together with
\eqref{mt2} and \eqref{mt3} yield that there exists $c>0$
independent of $t$, such that for any $t\geq 0$
\begin{equation}\label{j***}
\|q(\cdot,t)\|\leq c,
\end{equation}
\begin{equation}\label{j***1}
\|q_x(\cdot,t)\|\leq c,
\end{equation}
and
\begin{equation}\label{jn1}
\|q(\cdot,t)\|_{L^4(0,\infty)}\leq c.
\end{equation}
\end{remark}

\section{Decay of solution as $t\rightarrow \infty$}

In the sequel, we prove, under certain assumptions for the initial
data, that the solution of \eqref{nls}-\eqref{ic} decays to $0$
for any $x$ as $t\rightarrow \infty$. For this, we first establish that
the $L^4$ norm of the solution decays like
$\mathcal{O}\Big{(}t^{-\frac{1}{4}}\Big{)}$ as $t\rightarrow
\infty$.

\begin{theorem}\label{thm2} Let $q$ be the
unique global classical solution $q\in C^1(L^2)\cap C^0(H^2)$ of
the problem \eqref{nls}-\eqref{ic}, with  $Q\in C^2$ and
$Q(0)=q_0(0)$.

Assume that  $x q_0\in L^2(0,\infty)$, so in particular
$q_0\in H^1(0,\infty)\cap L^4(0,\infty)$. Furthermore, assume that as
$t\rightarrow\infty$
$$q(0,t)= \mathcal{O}(t^{-\alpha}),\;\;\;\;q_t(0,t)=
\mathcal{O}(t^{-\beta}),\;\;\;\;\mbox{for}\;\;\alpha>
3/2\;\;\;\mbox{and}\;\;\beta> 5/2.$$

It holds that there exists a positive constant $c$ independent of
$t$ such that
\begin{equation}\label{j2}
\int_0^\infty|q(x,t)|^4dx:=\|q(\cdot,t)\|_4^4\leq
\frac{c}{t}\;\;\;\;\mbox{for any}\;\;t\geq 1.
\end{equation}
\end{theorem}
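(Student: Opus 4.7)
The target rate $\|q(\cdot,t)\|_4^4=O(1/t)$ is the one-dimensional linear Schr\"odinger dispersive rate, so the natural tool is a pseudo-conformal (virial-type) energy identity adapted to the half-line. The plan is to work with the non-negative quantity
$$V(t):=\|(x+2it\partial_x)q(\cdot,t)\|^2=\|xq\|^2+4tM(t)+4t^2\|q_x\|^2,\qquad M(t):=\int_0^\infty x\,{\rm Im}(q\bar q_x)\,dx,$$
whose initial value $V(0)=\|xq_0\|^2$ is finite by the hypothesis $xq_0\in L^2$. Its time derivative will be computed by multiplying \eqref{nls} successively by $x^2\bar q$, by $x\bar q_x$, and by $\bar q_t$, and integrating by parts on $(0,\infty)$; interior terms assemble into a pseudo-conformal identity, while all boundary contributions at $x=\infty$ vanish by virtue of \eqref{dc}.

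The result of this computation, which is the technical core of the proof, should take the form
$$\frac{d}{dt}\bigl(V(t)+4t^2\|q(\cdot,t)\|_4^4\bigr)=4t\|q(\cdot,t)\|_4^4+B(t),$$
where the boundary remainder at $x=0$ is
$$B(t):=-4t\,{\rm Re}\bigl(q(0,t)\overline{q_x(0,t)}\bigr)-8t^2\,{\rm Re}\bigl(q_x(0,t)\overline{q_t(0,t)}\bigr).$$
Integrating in time and using $V(t)\ge 0$ yields
$$4t^2\|q(\cdot,t)\|_4^4\le \|xq_0\|^2+\int_0^t|B(s)|\,ds+\int_0^t 4s\|q(\cdot,s)\|_4^4\,ds.$$

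To handle $\int_0^\infty|B(s)|\,ds$ I would apply Cauchy--Schwarz, pairing each piece of $B$ against $q_x(0,\cdot)$, which belongs to $L^2(0,\infty)$ by Theorem \ref{thm1} together with Remark \ref{rem1}. The remaining factors are $\|s\,q(0,\cdot)\|_{L^2(0,\infty)}$ and $\|s^2\,q_t(0,\cdot)\|_{L^2(0,\infty)}$; the decay hypotheses $\alpha>3/2$ and $\beta>5/2$ are precisely what forces convergence at infinity of $\int s^{2-2\alpha}\,ds$ and $\int s^{4-2\beta}\,ds$ respectively (the neighbourhood of $s=0$ being harmless since $Q\in C^2$). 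Hence $\int_0^\infty|B(s)|\,ds<\infty$ and the bound above becomes $\phi(t)\le A+\int_0^t\phi(s)/s\,ds$ with $\phi(t):=4t^2\|q(\cdot,t)\|_4^4$ and a finite constant $A$.

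Finally, the integral $\int_0^1\phi(s)/s\,ds=\int_0^1 4s\|q\|_4^4\,ds$ is finite by the uniform $L^4$ bound of Remark \ref{rem1}, so a standard singular-Gronwall argument on $[1,\infty)$ produces $\phi(t)\le Ct$, equivalent to \eqref{j2}. I expect the main obstacle to be the bookkeeping in the derivation of the identity for $V'(t)$: carrying out the three integration-by-parts calculations, reconciling the interior cancellations between $4tM'(t)$, $8t\|q_x\|^2$ and $4t^2(\|q_x\|^2)'$, and isolating the precise two-line form of $B(t)$. The exponent thresholds $\alpha>3/2$ and $\beta>5/2$ are dictated exactly by this final form of $B$, which is why the hypotheses of the theorem are as strong as they are.
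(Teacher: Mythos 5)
Your proposal is correct and follows essentially the same route as the paper: the quantity $V(t)=\|xq+2\mathrm{i}tq_x\|^2$ is exactly the combination the paper isolates via its identity \eqref{j20}, your claimed identity for $\frac{d}{dt}(V+4t^2\|q\|_4^4)$ with boundary remainder $B(t)=-4t\,\mathrm{Re}(P\bar Q)-8t^2\,\mathrm{Re}(P\bar{Q_t})$ is (after multiplying by $4$) precisely the paper's \eqref{j19}--\eqref{j22}, and the Cauchy--Schwarz treatment of $B$ using $q_x(0,\cdot)\in L^2(0,\infty)$ together with the singular Gronwall step matches the paper's argument, which likewise defers the detailed derivation of the virial identity to \cite{AKnon}.
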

\begin{proof}
Again let the Dirichlet data be $$Q(t):=q(0,t),$$ and
let us denote the Neumann data by $$P(t):=q_x(0,t).$$

We set
$$y(t)={\rm Im}\int_0^\infty x\bar{q}(x,t)q_x(x,t)dx,$$
to obtain (cf. the analytical proof of the specific identity in
\cite{AKnon})
\begin{equation*}
\begin{split}
\partial_t\Big{(}t^2\int_0^\infty|q|^4dx\Big{)}=&\frac{1}{4}\partial_t\Big{[}4ty-\int_0^\infty
x^2|q(x,t)|^2dx-4t^2\int_0^\infty|q_x|^2dx\Big{]}\\
&-t{\rm
Re}\{P\bar{Q}\}-2t^2{\rm
Re}\{P\bar{Q_t}\}+t\int_0^\infty|q|^4dx.
\end{split}
\end{equation*}
Integrating the above in time in the interval $(0,t)$, we get
\begin{equation}\label{j19}
\begin{split}
t^2\|q(\cdot,t)\|_4^4=&\frac{1}{4}\Big{[}4ty-\int_0^\infty
x^2|q(x,t)|^2dx+\int_0^\infty
x^2|q(x,0)|^2dx-4t^2\int_0^\infty|q_x(x,t)|^2dx\Big{]}\\
&-\int_0^tr{\rm Re}\{P(r)\bar{Q}(r)\}dr-
2\int_0^tr^2{\rm
Re}\{P(r)\bar{Q_r}(r)\}dr+\int_0^tr\|q(\cdot,r)\|_4^4dr.
\end{split}
\end{equation}
Observe now that
\begin{equation}\label{j20}
4ty=\int_0^\infty[x^2|q|^2+4t^2|q_x|^2-|xq+2{\rm i}tq_x|^2]dx,
\end{equation}
while by \eqref{j19} we have
\begin{equation}\label{j21}
\begin{split}
t^2\|q(\cdot,t)\|_4^4=\frac{1}{4}\Big{[}&\int_0^\infty(x^2|q(x,t)|^2+4t^2|q_x(x,t)|^2-|xq+2{\rm
i}tq_x(x,t)|^2-x^2|q(x,t)|^2+x^2|q(x,0)|^2)dx\\
&-4t^2\int_0^\infty|q_x(x,t)|^2dx\Big{]}\\
-&\int_0^tr{\rm
Re}\{P(r)\bar{Q}(r)\}dr-2\int_0^tr^2{\rm
Re}\{P(r)\bar{Q_r}(r)\}dr+\int_0^tr\|q(\cdot,r)\|_4^4dr\\
=&\frac{1}{4}\Big{[}-\int_0^\infty|xq(x,t)+2{\rm
i}tq_x(x,t)|^2dx\Big{]}+\frac{1}{4}\int_0^\infty x^2|q(x,0)|^2dx\\
&-\int_0^t r{\rm Re}\{P(r)\bar{Q}(r)\}dr-2\int_0^tr^2{\rm
Re}\{P(r)\bar{Q_r}(r)\}dr+\int_0^tr\|q(\cdot,r)\|_4^4dr.
\end{split}
\end{equation}
Hence, \eqref{j21} gives
\begin{equation}\label{j22}
t^2\|q(\cdot,t)\|_4^4\leq \int_0^t
r\|q(\cdot,r)\|_4^4dr+\frac{1}{4}\int_0^\infty
x^2|q(x,0)|^2dx+F(P,Q,Q_t,t),
\end{equation}
for
$$F(P,Q,Q_t,t):=-\int_0^t r{\rm
Re}\{P(r)\bar{Q}(r)\}dr-2\int_0^t r^2{\rm
Re}\{P(r)\bar{Q_r}(r)\}dr.$$

Next, we prove that there exists $c>0$ independent of
$t$ such that for any $t\geq 0$,
\begin{equation}\label{j25}
|F(P,Q,Q_t,t)|\leq c.
\end{equation}
Indeed,  if $a>3/2$ and $\beta>5/2$, then we have for
$F=F(P(t),Q(t),Q_t(t),t)$
\begin{equation*}
\begin{split}
|F|\leq &c\Big{(}\int_0^t|P(r)|^2dr\Big{)}^{1/2}\Big{(}\int_0^t
r^2|Q(r)|^2dr\Big{)}^{1/2}+c\Big{(}\int_0^t|P(r)|^2dr\Big{)}^{1/2}\Big{(}\int_0^tr^4|Q_r(r)|^2dr\Big{)}^{1/2}\\
\leq & c\Big{(}\int_0^t|P(r)|^2dr\Big{)}^{1/2}\leq c,
\end{split}
\end{equation*}
since \eqref{unifqx} is true. Thus, \eqref{j25} follows.

Since $$\int_0^\infty x^2|q(x,0)|^2dx<\infty,$$ then using
\eqref{j25} at \eqref{j22}, we obtain that there exists $c>0$
independent of $t$ such that
\begin{equation*}\label{j26}
t^2\|q(\cdot,t)\|_4^4\leq \int_0^t r\|q(\cdot,r)\|_4^4dr+c.
\end{equation*}
So, since $\int_0^1\|q(\cdot,t)\|_4^4dt$ is bounded as \eqref{jn1} is true, we obtain
\eqref{j2}, cf. also the analytical proof of \cite{AKnon}.
\end{proof}

\begin{remark}\label{remn2}
An immediate outcome of the last theorem
(as well as its counterpart, Theorem 3.1 in \cite{AKnon}), is
the absence of any solitons for the defocusing NLS on the half-line. This result is arrived at
without the need of employing the unified theory of Fokas. Of course, it does follow from
that theory that there are no solitons for this problem (see \cite{L}) and even more one has a complete
rigorous asymptotic description of the decaying behavior  by analyzing the Riemann-Hilbert problem asymptotically for
large times (see \cite{FIS}). But one has to remember that the above theorem is
essential in providing a class of data admissible for the unified theory to begin with.
It is thus indispensible in  justifying the use
of the unified theory, which is a prerequisite for the Riemann-Hilbert formulation.
\end{remark}

Now we are ready to prove the next main theorem, which establishes the decay of defocusing NLS solution as $t\rightarrow \infty$, uniformly in space.
\begin{theorem}\label{prop1}
Let $q$ be the
unique global classical solution $q\in C^1(L^2)\cap C^0(H^2)$ of
the problem \eqref{nls}-\eqref{ic}, with  $Q\in C^2$ and
$Q(0)=q_0(0)$.

Assume that  $x q_0\in L^2(0,\infty)$, so in particular
$q_0\in H^1(0,\infty)\cap L^4(0,\infty)$. Furthermore assume that as
$t\rightarrow\infty$
$$q(0,t)= \mathcal{O}(t^{-\alpha}),\;\;\;\;q_t(0,t)=
\mathcal{O}(t^{-\beta}),\;\;\;\;\mbox{for}\;\;\alpha>
3/2\;\;\;\mbox{and}\;\;\beta> 5/2.$$

It holds that
\begin{equation}\label{j27}
\displaystyle{\lim_{t\rightarrow\infty}}q(x,t)=0,
\end{equation}
for any $x>0$.
\end{theorem}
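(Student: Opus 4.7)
My plan is to upgrade the $L^4$-decay estimate from Theorem~\ref{thm2} to a pointwise decay estimate via a one-dimensional Gagliardo--Nirenberg interpolation, combining it with the uniform $H^1$-in-space bound from Remark~\ref{rem1}. The hypotheses of Theorem~\ref{prop1} are precisely those needed to invoke both Theorem~\ref{thm2} and Remark~\ref{rem1}: in particular $q_0\in H^1\cap L^4$ and $\alpha,\beta>1/2$, so that $\|q(\cdot,t)\|$, $\|q_x(\cdot,t)\|$ are bounded uniformly in $t\geq 0$, and $\|q(\cdot,t)\|_4^4\leq c/t$ for all $t\geq 1$.

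The key step is the following one-dimensional interpolation: for every fixed $t$, since $q(\cdot,t)\in H^1(0,\infty)$ with $q(x,t)\to 0$ as $x\to\infty$ by \eqref{dc}, I would write, for any $x\geq 0$,
\begin{equation*}
|q(x,t)|^3 \,=\, -\int_x^\infty \partial_y |q(y,t)|^3\,dy \,=\, -3\int_x^\infty |q(y,t)|\,\mathrm{Re}\bigl(\bar q(y,t)\,q_y(y,t)\bigr)\,dy.
\end{equation*}
Bounding the real part by $|q(y,t)||q_y(y,t)|$ and applying the Cauchy--Schwarz inequality on $(0,\infty)$, I obtain
\begin{equation*}
|q(x,t)|^3 \,\leq\, 3\int_0^\infty |q(y,t)|^2 |q_y(y,t)|\,dy \,\leq\, 3\,\|q(\cdot,t)\|_4^2\,\|q_x(\cdot,t)\|.
\end{equation*}
This is a uniform (in $x$) bound, so it yields
\begin{equation*}
\|q(\cdot,t)\|_{L^\infty(0,\infty)}^3 \,\leq\, 3\,\|q(\cdot,t)\|_4^{2}\,\|q_x(\cdot,t)\|.
\end{equation*}

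Inserting the two a~priori bounds, namely $\|q(\cdot,t)\|_4^2\leq c\,t^{-1/2}$ for $t\geq 1$ from Theorem~\ref{thm2} and $\|q_x(\cdot,t)\|\leq c$ from \eqref{j***1} in Remark~\ref{rem1}, I conclude
\begin{equation*}
\|q(\cdot,t)\|_{L^\infty(0,\infty)} \,\leq\, c\,t^{-1/6}, \qquad t\geq 1,
\end{equation*}
which of course implies $q(x,t)\to 0$ as $t\to\infty$ for every fixed $x>0$ (in fact uniformly in $x$, and including $x=0$ by the decay assumption on $Q$).

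I do not anticipate a serious obstacle: the main conceptual point is simply that the combination of the time-decay of $\|q(\cdot,t)\|_4$ and the uniform-in-time control of $\|q_x(\cdot,t)\|$ suffices, via a standard one-dimensional interpolation, to force pointwise decay. The only minor technical care is that $q$ is complex-valued, so one must compute $\partial_y|q|^3 = 3|q|\,\mathrm{Re}(\bar q q_y)$ rather than mimicking the real case; this is handled in the first display above.
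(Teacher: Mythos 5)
Your proof is correct and follows essentially the same argument as the paper: both interpolate $|q|^3$ pointwise against $\|q(\cdot,t)\|_4^2\,\|q_x(\cdot,t)\|$ via the fundamental theorem of calculus and Cauchy--Schwarz, then invoke Theorem~\ref{thm2} and \eqref{j***1}. The only (immaterial) difference is that you integrate $\partial_y|q|^3$ from $x$ to $\infty$ using the spatial decay \eqref{dc}, whereas the paper integrates $\frac{d}{dx}q^3$ from $0$ to $x$ and absorbs the resulting boundary term $|q(0,t)|^3$ using the assumed decay of the Dirichlet data.
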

\begin{proof}
Indeed, we have by Theorem \ref{thm2}
$$\displaystyle{\lim_{t\rightarrow\infty}}\|q(\cdot,t)\|_4\leq
\displaystyle{\lim_{t\rightarrow\infty}}\frac{c}{t^{1/4}}=0.$$
Using that
$$\frac{d}{dx}q^3(x,t)=3q^2(x,t)q_x(x,t),$$
we have
\begin{equation}\label{apo1}
\begin{split}
|q^3(x,t)|=&|q^3(0,t)+\int_0^x3q^2(y,t)q_x(y,t)dy|\\
\leq &|q(0,t)|^3+c\Big{(}\int_0^\infty
|q(y,t)|^4dy\Big{)}^{1/2}\Big{(}\int_0^\infty
|q_x(y,t)|^2dy\Big{)}^{1/2}.
\end{split}
\end{equation}
So, under the assumption of decaying initial data
$q(0,t)\rightarrow 0$ as $t\rightarrow\infty$,
and since by Remark
\ref{rem1}
 we have
(cf. \eqref{j***1})
$$
\|q_x(\cdot,t)\|\leq c,
$$
we conclude from  \eqref{apo1} that
\begin{equation}\label{apo2}
\begin{split}
\displaystyle{\lim_{t\rightarrow\infty}}|q^3(x,t)|
 \leq
c\displaystyle{\lim_{t\rightarrow\infty}}\|q(\cdot,t)\|_4^{2}=0.
\end{split}
\end{equation}
\end{proof}

\section{An $L^1(0,\infty)$ estimate for the Neumann data}

At this Section, we shall use the result of Theorem \ref{prop1}, i.e. that
for all positive $x$,
\begin{equation*}
\displaystyle{\lim_{t\rightarrow\infty}}q(x,t)=0,
\end{equation*}
in order to prove, under the assumptions of Theorem \ref{prop1} of course, an
$L^1(0,\infty)$ bound for $q_x(0,t)$.

Consider relation \eqref{ja1}
\begin{equation*}\label{j1}
|q_x(0,t)|^2={\rm i}\frac{d}{dt}(q,q_x)+{\rm
i}q(0,t)\bar{q_t}(0,t)+|q(0,t)|^4.
\end{equation*}
and multiply with $t^p,~~p>1$ to obtain
\begin{equation}\label{ap1}
t^p|q_x(0,t)|^2={\rm i}t^p\frac{d}{dt}(q,q_x)+{\rm
i}t^pq(0,t)\bar{q_t}(0,t)+t^p|q(0,t)|^4.
\end{equation}
Integration of \eqref{ap1} in time gives
\begin{equation}\label{ap2}
\begin{split}
\int_0^\infty t^p|q_x(0,t)|^2dt
= &{\rm i} [t^p(q(\cdot,t),q_x(\cdot,t))]_0^\infty-{\rm i}\int_0^\infty
pt^{p-1}(q(\cdot,t),q_x(\cdot,t))dt\\
&+{\rm i}\int_0^\infty t^pq(0,t)\bar{q_t}(0,t)dt+
\int_0^\infty t^p|q(0,t)|^4dt\\
\leq&
\displaystyle{\lim_{t\rightarrow\infty}}\Big{(}t^p\|q(\cdot,t)\|\|q_x(\cdot,t)\|\Big{)}+c\int_0^\infty
t^{p-1}\|q(\cdot,t)\|\|q_x(\cdot,t)\|dt\\
&+c\int_0^\infty t^p|q(0,t)||q_t(0,t)|dt+ \int_0^\infty
t^p|q(0,t)|^4dt.
\end{split}
\end{equation}
As  before, integrating \eqref{j**} in $(t,\infty)$ and
using that $\|q(\cdot,\infty)\|=0$, we get
$$\|q(\cdot,t)\|^2=-2\int_t^\infty{\rm Im}\{q_x(0,r)\bar{q}(0,r)\}dr,$$
which, under the assumptions of Theorem \ref{thm2}, by
\eqref{unifqx} yields for any $t\geq 0$
\begin{equation*}
\begin{split}
\|q(\cdot,t)\|^2\leq
c\Big{(}\int_t^\infty|q(0,r)|^2dr\Big{)}^{1/2}.
\end{split}
\end{equation*}
Thus, we obtain by \eqref{ap2}

\begin{equation}\label{ap22}
\begin{split}
\int_0^\infty t^p|q_x(0,t)|^2dt\leq&
c\displaystyle{\lim_{t\rightarrow\infty}}\Big{(}t^p\Big{(}\int_t^\infty|q(0,r)|^2dr\Big{)}^{1/4}\Big{)}
\\&+c\int_0^\infty
t^{p-1}\Big{(}\int_t^\infty|q(0,r)|^2dr\Big{)}^{1/4}dt\\
&+c\int_0^\infty t^p|q(0,t)||q_t(0,t)|dt+ \int_0^\infty
t^p|q(0,t)|^4dt.
\end{split}
\end{equation}

Furthermore, since $\|q_x(\cdot,t)\|$ is bounded
$$t^p\|q(\cdot,t)\|\|q_x(\cdot,t)\|\leq
ct^p\Big{(}\int_t^\infty|q(0,r)|^2dr\Big{)}^{1/4}\rightarrow 0$$
if $q(0,t)$ is assured to have a sufficiently fast  (polynomial) decay as
$t\rightarrow\infty$ ($\mathcal{O}(t^{-\alpha})$ with $\alpha$ to be specified). 
Also, we have
$$\int_0^\infty t^{p-1}\|q(\cdot,t)\|\|q_x(\cdot,t)\|dt\leq
c\int_0^\infty
t^{p-1}\Big{(}\int_t^\infty|q(0,r)|^2dr\Big{)}^{1/4}dt\leq c,$$ again if
 $q(0,t)$ has a sufficiently fast decay as
$t\rightarrow\infty$. The same argument of sufficiently fast
decay for $q_t(0,t)$ as $t\rightarrow\infty$, (like
$\mathcal{O}(t^{-\beta})$ with $\beta$ to be specified), together with the previous one,
finally gives, using \eqref{ap22}
\begin{equation}\label{ap3}
\begin{split}
\int_0^\infty t^p|q_x(0,t)|^2dt\leq c,
\end{split}
\end{equation}
if $p>1$,  $\alpha>3/2$ and $\beta>5/2$, satisfying also
$$\alpha>2p+1/2,\;\;\;\alpha+\beta>p+1,\;\;\;\alpha>(p+1)/4.$$

Now, we are ready to  derive the  $L^1(0,\infty)$ estimate for
$|q_x(0,t)|$. We have the following  main theorem.
\begin{theorem}
Let $q$ be the
unique global classical solution $q\in C^1(L^2)\cap C^0(H^2)$ of
the problem \eqref{nls}-\eqref{ic}, with  $Q\in C^2$ and
$Q(0)=q_0(0)$.

Assume that  $x q_0\in L^2(0,\infty)$.

If $q(0,t)$, $q_t(0,t)$ have a sufficiently fast decay as
$t\rightarrow\infty$,  that is $\mathcal{O}(t^{-\alpha})$ and
$\mathcal{O}(t^{-\beta})$, for $\alpha> 5/2$ and $\beta> 5/2$
respectively, then
$$\int_0^\infty |q_x(0,t)|dt<\infty.$$
\end{theorem}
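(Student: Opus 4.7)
The plan is to split the integral for $|q_x(0,t)|$ on $(0,\infty)$ into a piece on $(0,1)$ and a piece on $(1,\infty)$, and treat the two pieces with different weighted Cauchy--Schwarz arguments that both rely on the quantitative $L^2$-type bounds already established in the paper.

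On the finite interval $(0,1)$, an application of the Cauchy--Schwarz inequality gives
\begin{equation*}
\int_0^1 |q_x(0,t)|\,dt \le \left(\int_0^1 |q_x(0,t)|^2\,dt\right)^{1/2} \le \|q_x(0,\cdot)\|_{L^2(0,\infty)},
\end{equation*}
and this last norm is finite by Theorem \ref{thm1} together with Remark \ref{rem1}, since the decay hypotheses $\alpha>5/2$ and $\beta>5/2$ certainly imply the integrability assumptions $\alpha>1/2$, $\beta>1/2$ needed there. Hence the contribution of $(0,1)$ is harmless.

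On the tail $(1,\infty)$ the key idea is to introduce an auxiliary weight $t^{p/2}$ with $p>1$ close to $1$, and apply Cauchy--Schwarz as
\begin{equation*}
\int_1^\infty |q_x(0,t)|\,dt = \int_1^\infty t^{-p/2}\cdot t^{p/2}|q_x(0,t)|\,dt \le \left(\int_1^\infty t^{-p}\,dt\right)^{1/2}\left(\int_1^\infty t^p|q_x(0,t)|^2\,dt\right)^{1/2}.
\end{equation*}
The first factor is finite because $p>1$, and the second factor is finite thanks to the weighted $L^2$ estimate \eqref{ap3}, provided the parameters satisfy $p>1$, $\alpha>3/2$, $\beta>5/2$, together with $\alpha>2p+1/2$, $\alpha+\beta>p+1$ and $\alpha>(p+1)/4$.

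The final step is a parameter-selection bookkeeping: choose $p=1+\eps$ with $\eps>0$ small. The auxiliary constraints then become $\alpha>5/2+2\eps$, $\alpha+\beta>2+\eps$ and $\alpha>(2+\eps)/4$, all of which are implied by the standing hypotheses $\alpha>5/2$ and $\beta>5/2$ once $\eps$ is small enough. Combining the bounds on $(0,1)$ and $(1,\infty)$ yields $\int_0^\infty|q_x(0,t)|\,dt<\infty$. The only slightly delicate point in the argument is verifying that the chain of inequalities leading to \eqref{ap3} does close with the specific decay thresholds stated in the theorem; everything else is a weighted Cauchy--Schwarz packaging of the previously proven estimates.
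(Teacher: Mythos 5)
Your proposal is correct and follows essentially the same route as the paper's own proof: the same split of the integral at $t=1$, the same weighted Cauchy--Schwarz with weight $t^{(1+\eps)/2}$ (i.e.\ $p=1+\eps$ small) on the tail, and the same appeal to the weighted estimate \eqref{ap3} followed by the same parameter check $\alpha>2p+1/2$, $\alpha+\beta>p+1$, $\alpha>(p+1)/4$.
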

\begin{proof}
\begin{equation}\label{ap4}
\begin{split}
\int_0^\infty |q_x(0,t)|dt&=\int_0^1|q_x(0,t)|dt+\int_1^\infty
t^{-\frac{1}{2}-\frac{\eps}{2}}t^{\frac{1}{2}+\frac{\eps}{2}}|q_x(0,t)|dt\\
&\leq
c\Big{(}\int_0^11^2dt\Big{)}^{1/2}\Big{(}\int_0^1|q_x(0,t)|^2dt\Big{)}^{1/2}+
\Big{(}\int_1^\infty
t^{-1-\eps}dt\Big{)}^{1/2}\Big{(}\int_1^\infty
t^{1+\eps}|q_x(0,t)|^2dt\Big{)}^{1/2}\\
&\leq c+ c\Big{(}\int_0^\infty
t^{1+\eps}|q_x(0,t)|^2dt\Big{)}^{1/2}\leq c,
\end{split}
\end{equation}
where we used \eqref{ap3} for $p:=1+\eps$ with $\eps>0$ as small.
Note that here, indeed $\alpha>5/2>3/2$ and $\beta>5/2$ and
satisfy also
$$\alpha>2p+1/2,\;\;\;\alpha+\beta>p+1,\;\;\;\alpha>(p+1)/4.$$
\end{proof}

\begin{remark}\label{remn3}
An immediate outcome of the last theorem is that if the initial data satisfy the Carrol-Bu condition
$q_0 \in H^2$ and the extra condition $x q_0\in L^2(0,\infty)$ and if the Dirichlet data $Q \in C^2$ are such that 
$Q(0)=q_0(0)$ and
$Q(t)$, $Q_t(t)$ have a sufficiently fast decay as
$t\rightarrow\infty$,  that is $\mathcal{O}(t^{-5/2 + \epsilon})$ 
for some $\epsilon >0,$ then the unified method of Fokas $can$ be applied, even for $t \to \infty$,
and hence the resulting Riemann-Hilbert factorisation problem which sums up the
inverse "Fokas scattering transform" is valid for all time and can be used to extract long time asympotics.
\end{remark}

\section{Acknowledgement}
Research funded by ARISTEIA II grant no. 3964 from
the General Secretariat of Research and Technology, Greece.


\begin{thebibliography}{99}
\bibitem{AKnon}
{\sc D.C. Antonopoulou, S. Kamvissis}, {\em On the Dirichlet to
Neumann problem for the 1-dimensional Cubic NLS Equation on the
Half-Line}, Nonlinearity, 28 (2015), pp.~3073--3099.

\bibitem{CarBu}
{\sc R. Carrol, Q. Bu}, {\em Solution of the forced Nonlinear
Schr\"odinger (NLS) equation using PDE techniques}, Applic. Anal.,
41 (1991), pp.~33--51.

\bibitem{FIS}
{\sc A.~S. Fokas, A.~R. Its and L.-Y. Sung}, {\em The nonlinear
Schr\"odinger equation on the half-line}, Nonlinearity, 18 (2005),
pp.~1771--1822.

\bibitem{L}
{\sc Jonatan~Lenells}, {\em
Absence of solitons for the defocusing NLS equation on the half-line}, arXiv:1412.3495.

\end{thebibliography}
\end{document}